\DeclareUrlCommand\email{\urlstyle{rm}}
\numberwithin{equation}{section}
\theoremstyle{plain}
\newtheorem{theorem}{Theorem}[section]
\newtheorem{lemma}[theorem]{Lemma}
\newtheorem*{HN}{The Hilbert Nullstellensatz}
\theoremstyle{definition}
\newtheorem{example}[theorem]{Example}
\theoremstyle{remark}
\newtheorem{remark}[theorem]{Remark}
\title{On Polynomial Interpolation on Arbitrary Varieties}
\author{Tom McKinley, Boris Shekhtman, Brian Tuesink \\
Department of Mathematics and Statistics \\
University of South Florida, Tampa, Florida 33620, USA \\
E-Mail: \email{shekhtma@usf.edu} \\
}
\begin{document}
\maketitle

\begin{abstract}
To the best of our knowledge this paper is the first attempt to introduce and study polynomial interpolation of the polynomial data given on arbitrary varieties. In the first part of the paper we present results on the solvability of such problems. In the second part of the paper we relate the interpolation problem to polynomial solution of some boundary values problems. In particular, we extend a result of W. K. Hayman and Z. G. Shanidze \cite{Hay:Sha}.
\end{abstract}

\section{Introduction}\label{sec1}

This paper is a first attempt to introduce and study the problem of polynomial interpolation on curves, or more generally, on varieties in $\mathbb{C}^d$. The interpolation problem had mostly been studied on points. The singular exception is the study of interpolation on ``flats'' done by Carl de Boor, Nira Dyn and Amos Ron in \cite{deB:Dyn:Ron}. To the best of our knowledge this is the first effort to study interpolation on general varieties, at least in the field of approximation theory.

Here is the first general question: Let $V_1,\dotsc,V_n$ be subsets of $\mathbb{C}^d$ or $\mathbb{R}^d$ and let $p_1,\dotsc,p_n$ be polynomials in $\mathbb{C}\left[\mathbf{x}\right]:=\mathbb{C}\left[x_1,\dotsc,x_d\right]$ that we will refer to as data. When does there exist a polynomial $f\in\mathbb{C}
\left[x_1,\dotsc,x_d\right]$ such that the restriction $f$ onto $V_j$, denoted by $f\mid V_j$, coincides with $p_j$ on $V_j$ for all $j=1,\dotsc,n$? Notice that if such an interpolant $f$ exists then $V_j\subset\mathcal{V}_j:=\{\mathbf{x\in\mathbb{C}}^d:\left(f-p_j\right)(\mathbf{x})=0\}$ which is a variety in $\mathbb{C}^d$ and thus $f$ interpolates $p_j$ on the variety $\mathcal{V}_j$. For this reason, without loss of generality, we can restrict ourselves to interpolation on varieties $\mathcal{V}_1,\dotsc,\mathcal{V}_n$. Recall that a variety $\mathcal{V}\subset\mathcal{C}^d$ is defined as a set such that
$$
\mathcal{V}=\mathcal{V}(J):=\left\{x\in\mathbb{C}^d:p(\mathbf{x})=0\text{ for all }p\in J\right\}
$$
for some ideal $J\in\mathbb{C}(\mathbf{x})$.

It is clear that, in order for the interpolant $f$ to exist, the data have to coincide on the intersection of the varieties, i.e.,
\begin{equation}
\label{eq1.1}
p_j=p_k\text{ on }\mathcal{V}_j\cap\mathcal{V}_k.
\end{equation}%
However, as will be shown in a moment, this condition is not sufficient in general. Here is a quick example:

\begin{example}\label{exam1.1}
Let $\mathcal{V}_1=\left\{(x,y)\in\Bbbk^2:y=x^2\right\}$ and $\mathcal{V}_2=\{(x,y)\in\Bbbk:y=0\}=\{(x,0)\in\Bbbk^2\}$ suppose that $p_1(x,y)=x$ and $p_2(x,y)=0$. Clearly \eqref{eq1.1} holds. If $f(x,y)=p_2(x,y)=0$ on $\mathcal{V}_2$ then $f(x,0)=0$ and $f$ must have a factor of $y$, i.e., $f(x,y)=yf_1(x,y)$ for some polynomial $f_1$. But then $f\mid\mathcal{V}_1=yf_1(x,y)\mid\mathcal{V}_{1}=x^2f_1\left(x,x^2\right)\ne x$.
\end{example}

In this case, the intersection of the two varieties is a double-point, hence $p_1$ and $p_2$ would need to agree at that point not only in value but also in $D_x$ derivative.

The sufficiency depends on the varieties, or more precisely, on polynomial ideals generated by these varieties.

In Section \ref{sec2} of the paper we will give a necessary and sufficient condition on the varieties $\mathcal{V}_1,\dotsc,\mathcal{V}_n$ so that the interpolation problem with data satisfying \eqref{eq1.1} is always solvable. We will also introduce conditions on the data that allow the interpolation problem to be solvable for a general pair of varieties.

In Section \ref{sec3} we will seek solutions to the interpolation problem among polynomial solution to PDE's with constant coefficients, thus pointing out a connection of the subject to the very general boundary problems for PDE's. In particular we will extend some results of W. K. Hayman and Z. G. Shanidze of \cite{Hay:Sha} who considered the problem in two variables and for homogeneous PDE's of degree $2$.

The best news is that the proofs are extremely simple and rely on nothing more than the Hilbert Nullstellensatz.

We will now recall some rudimentary notations and results from algebraic geometry. All of them can be found in \cite{Cox:Lit:Osh}.

For a field $\Bbbk$ the algebra of all polynomials in $d$ variables with coefficients in $\Bbbk$ is denoted by $\Bbbk\left[\mathbf{x}\right]:=\Bbbk\left[x_1,\dotsc,x_d\right]$. Every polynomial $p\in\Bbbk\left[\mathbf{x}\right]$ can be written as
$$
p(\mathbf{x})=\sum_{\left\vert\alpha\right\vert\le n}a_a\mathbf{x}^{\alpha}
$$
for some $n$, where the monomials $\mathbf{x}^{\alpha}:=x_1^{\alpha_1}\dotsc x_d^{\alpha_d}$ for some nonnegative integers $\alpha_1,\dotsc,\alpha_d$ and $\left\vert\alpha\right\vert=\sum\limits_{j=1}^d\alpha_j$. If at least one of the coefficients $a_{\alpha}$ with $\left\vert\alpha\right\vert=n$ is different from zero we will say that the polynomial $p$ has degree $n$; we will use $p^{\wedge}$ to denote the leading form of such $p$:
$$
p^\wedge(\mathbf{x}):=\sum_{\left\vert\alpha\right\vert=n}a_{a}\mathbf{x}^{\alpha}.
$$
If $A$ is a set of polynomials in $\Bbbk\left[\mathbf{x}\right]$ we will use $A^{\wedge}$ to denote the set of all leading forms of polynomials in $A$.

For an ideal $J\subset\Bbbk\left[\mathbf{x}\right]$ we use $\mathcal{V}(J)$ to denote the associated variety:
$$
\mathcal{V}(J):=\{\mathbf{x}\in\Bbbk^{d}:f(\mathbf{x})=0\text{ for all }f\in J\}.
$$
A variety $\mathcal{V}$ in $\Bbbk^d$ is a subset of $\Bbbk^d$ such that $\mathcal{V=V}(J)$ for some ideal $J\subset\Bbbk\left[\mathbf{x}\right]$.

An ideal $J$ is called radical if $f^n\in J$ implies $f\in J$. In particular, with every variety $\mathcal{V}\subset\Bbbk^d$ we associate the radical ideal
$$
J(\mathcal{V}):=\left\{f\in\Bbbk\left[\mathbf{x}\right]:f(\mathbf{x})=0\text{ for all }\mathbf{x}\in\mathcal{V}\right\}.
$$
It is clear that for two varieties $\mathcal{V}_1$ and $\mathcal{V}_2$ the ideal $J\left(\mathcal{V}_1\cup\mathcal{V}_2\right)=J\left(\mathcal{V}_1\right)\cap J\left(\mathcal{V}_2\right)$. Also for the ideals $J_1$ and $J_2$ the sum $J_1+J_2$ is again an ideal and $\mathcal{V}\left(J_1+J_2\right)=\mathcal{V}\left(J_1\right)\cap\mathcal{V}\left(J_2\right)$.

We now come to the main ingredient of the proofs in the paper:

\begin{HN}
Let $J$ be an ideal in $\mathbb{C}\left[\mathbf{x}\right]$. If a polynomial $p\in\mathbb{C}\left[\mathbf{x}\right]$ vanishes on $\mathcal{V}(J)$ then there exists an integer $n$ such that $p^{n}\in J$.
\end{HN}

In particular, if $\mathcal{V}(J)=\varnothing$ then $1\in J$ and hence $J=\mathbb{C}\left[\mathbf{x}\right]$. If an ideal $J$ is radical and $p\in\mathbb{C}\left[\mathbf{x}\right]$ vanishes on $\mathcal{V}(J)$ then $p\in J$.

\section{Interpolation on general varieties}\label{sec2}

\subsection{Interpolation on disjoint varieties}\label{sec2.1}

In this subsection we will prove that interpolation on disjoint varieties is always possible. We will start with a simple lemma, which, peculiarly, is a form of ``Hahn--Banach Separation Theorem'' for varieties:

\begin{lemma}\label{lem2.1}
Let $\mathcal{V}_1$ and $\mathcal{V}_2$ be two disjoint varieties in $\mathbb{C}^d$. Then there exists a polynomial $f$ such that $f\mid\mathcal{V}_1=1$ and $f\mid\mathcal{V}_2=0$.
\end{lemma}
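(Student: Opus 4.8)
The plan is to use the Hilbert Nullstellensatz directly, exploiting the fact that the disjointness of $\mathcal{V}_1$ and $\mathcal{V}_2$ forces the sum of their ideals to be the whole ring. Concretely, let $J_1 = J(\mathcal{V}_1)$ and $J_2 = J(\mathcal{V}_2)$ be the (radical) ideals of the two varieties. Since $\mathcal{V}(J_1 + J_2) = \mathcal{V}(J_1) \cap \mathcal{V}(J_2) = \mathcal{V}_1 \cap \mathcal{V}_2 = \varnothing$, the Nullstellensatz (in its weak form, as recorded in the excerpt right after the statement) gives $1 \in J_1 + J_2$.

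Having secured $1 \in J_1 + J_2$, I would write $1 = g_1 + g_2$ with $g_1 \in J_1$ and $g_2 \in J_2$, and then set $f := g_2$. Because $g_2 \in J_2 = J(\mathcal{V}_2)$, we have $f \mid \mathcal{V}_2 = 0$ by definition of $J(\mathcal{V}_2)$. On the other hand, on $\mathcal{V}_1$ the polynomial $g_1$ vanishes (since $g_1 \in J_1 = J(\mathcal{V}_1)$), so $f = g_2 = 1 - g_1 = 1$ on $\mathcal{V}_1$. Thus $f \mid \mathcal{V}_1 = 1$ and $f \mid \mathcal{V}_2 = 0$, which is exactly what is claimed.

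There is essentially no obstacle here: the only thing to be careful about is the bookkeeping between the weak and strong forms of the Nullstellensatz. The excerpt states the strong form and then remarks that $\mathcal{V}(J) = \varnothing$ implies $1 \in J$, so applying that remark to $J = J_1 + J_2$ is immediate. One small point worth noting is that we do not even need $J_1$ and $J_2$ to be radical for this argument — the weak Nullstellensatz applies to any ideal — but taking them radical (the ideals $J(\mathcal{V}_i)$) is the natural choice and costs nothing. The whole proof is three or four lines.
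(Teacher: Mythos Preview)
Your proof is correct and essentially identical to the paper's own argument: both use the weak Nullstellensatz to get $1 = g_1 + g_2$ with $g_i \in J(\mathcal{V}_i)$ and then take $f = g_2$. Your additional remarks about weak versus strong Nullstellensatz and about radicality being unnecessary are accurate but go slightly beyond what the paper records.
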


\begin{proof}
Since $\mathcal{V}_1\cap\mathcal{V}_2=\varnothing$ it follows from the Hilbert Nullstellensatz that $1\in J\left(\mathcal{V}_1\right)+J\left(\mathcal{V}_2\right)$, hence there are polynomials $g_1\in J\left(\mathcal{V}_1\right)$ and $g_2\in J\left(\mathcal{V}_2\right)$ such that
$$
1=g_1+g_2.
$$

Now $g_2$, as an element of $J\left(\mathcal{V}_2\right)$ vanishes on $\mathcal{V}_2$. Since $g_1$ vanishes on $\mathcal{V}_1$ it follows that $g_{2}\mid\mathcal{V}_1=1$ hence the polynomial $f=g_2$ satisfies the conclusion of the lemma.
\end{proof}

\begin{lemma}\label{lem2.2}
Given a collection of pairwise non-intersecting varieties $\mathcal{V}_1,\dotsc,\mathcal{V}_n$ in $\mathbb{C}^d$ then for every $k=1,\dotsc,n$ there exists a polynomial $f_{k}$ such that
$$
f_k\mid\mathcal{V}_k=1\text{ and }f_k\mid\mathcal{V}_j=0
$$
for all $j\ne k$.
\end{lemma}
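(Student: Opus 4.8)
The plan is to reduce directly to Lemma~\ref{lem2.1} by collapsing the $n-1$ varieties $\mathcal{V}_j$ with $j\ne k$ into a single variety. First I would recall the standard fact, which follows by induction from the identity $J(\mathcal{V}_1\cup\mathcal{V}_2)=J(\mathcal{V}_1)\cap J(\mathcal{V}_2)$ noted in the introduction, that a finite union of varieties is again a variety; write $\mathcal{W}_k:=\bigcup_{j\ne k}\mathcal{V}_j$. Next I would verify disjointness: $\mathcal{V}_k\cap\mathcal{W}_k=\bigcup_{j\ne k}\left(\mathcal{V}_k\cap\mathcal{V}_j\right)=\varnothing$, since the $\mathcal{V}_i$ are pairwise non-intersecting. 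Then Lemma~\ref{lem2.1}, applied to the disjoint pair $\mathcal{V}_k$ and $\mathcal{W}_k$, yields a polynomial $f_k$ with $f_k\mid\mathcal{V}_k=1$ and $f_k\mid\mathcal{W}_k=0$; since $\mathcal{V}_j\subset\mathcal{W}_k$ for every $j\ne k$, this gives $f_k\mid\mathcal{V}_j=0$, which is exactly the assertion.

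An alternative route avoids even invoking the union-of-varieties fact. For each $j\ne k$ apply Lemma~\ref{lem2.1} to the disjoint pair $\mathcal{V}_k,\mathcal{V}_j$ to obtain $g_j$ with $g_j\mid\mathcal{V}_k=1$ and $g_j\mid\mathcal{V}_j=0$, and set $f_k:=\prod_{j\ne k}g_j$. On $\mathcal{V}_k$ every factor equals $1$, so $f_k\mid\mathcal{V}_k=1$; on $\mathcal{V}_j$ the $j$-th factor vanishes, so $f_k\mid\mathcal{V}_j=0$. Either argument works; the product version is perhaps cleaner since it uses nothing beyond Lemma~\ref{lem2.1} itself.

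There is essentially no obstacle here: the statement is an immediate corollary of Lemma~\ref{lem2.1}, and the only point requiring a word of care (in the first route) is the elementary observation that a finite union of varieties is a variety, which the product route sidesteps entirely.
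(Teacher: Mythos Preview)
Your first route is exactly the paper's proof: observe that $\bigcup_{j\ne k}\mathcal{V}_j$ is a variety disjoint from $\mathcal{V}_k$ and apply Lemma~\ref{lem2.1}. Your alternative product construction is also correct and is a nice extra, though the paper does not mention it.
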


\begin{proof}
Since $\bigcup\limits_{j\ne k}\mathcal{V}_j$ is a variety, the result follows from the previous lemma.
\end{proof}

\begin{theorem}\label{thm2.1}
Given a collection of pairwise non-intersecting varieties $\mathcal{V}_1,\dotsc,\mathcal{V}_n$ in $\mathbb{C}^d$ and arbitrary polynomials $p_1,\dotsc,p_n$ in $\mathbb{C}\left[\mathbf{x}\right]$ there exists a polynomial $f\in\mathbb{C}\left[\mathbf{x}\right]$ such that $f\mid\mathcal{V}_k=p_{k}\mid\mathcal{V}_k$ for all $k=1,\dotsc,n$.
\end{theorem}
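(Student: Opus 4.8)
The plan is to combine the ``partition of unity'' polynomials $f_1,\dotsc,f_n$ produced by Lemma \ref{lem2.2} with the prescribed data $p_1,\dotsc,p_n$ in the obvious way. Specifically, I would set
\[
f:=\sum_{k=1}^n p_k f_k,
\]
and then check that this $f$ solves the interpolation problem. This is the natural analogue of the classical Lagrange interpolation formula, with the Lagrange fundamental polynomials replaced by the separating polynomials $f_k$.

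The verification is straightforward. Fix an index $m\in\{1,\dotsc,n\}$ and restrict $f$ to $\mathcal{V}_m$. For each $k\ne m$ we have $f_k\mid\mathcal{V}_m=0$ by Lemma \ref{lem2.2}, so the corresponding term $p_k f_k$ vanishes identically on $\mathcal{V}_m$. For $k=m$ we have $f_m\mid\mathcal{V}_m=1$, so $p_m f_m$ agrees with $p_m$ on $\mathcal{V}_m$. Hence $f\mid\mathcal{V}_m=p_m\mid\mathcal{V}_m$, which is exactly the required conclusion. Since $m$ was arbitrary, the theorem follows.

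There is essentially no obstacle here: all the work has already been done in Lemma \ref{lem2.1} (the Nullstellensatz argument giving $1\in J(\mathcal{V}_1)+J(\mathcal{V}_2)$) and in its extension to finitely many varieties in Lemma \ref{lem2.2}, which uses only that a finite union of varieties is again a variety. The one point worth a sentence of care is that each $f_k$ only satisfies $f_k\mid\mathcal{V}_k=1$ and $f_k\mid\mathcal{V}_j=0$ \emph{on the varieties themselves}, not as a polynomial identity; but this is all that is needed, because we only ever evaluate $f$ on the $\mathcal{V}_m$'s. I would also remark that, unlike in the overlapping case treated later, no compatibility condition such as \eqref{eq1.1} is needed precisely because the varieties are pairwise disjoint, so there is nothing to reconcile on intersections.
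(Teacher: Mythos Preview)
Your proposal is correct and is exactly the paper's proof: set $f=\sum_{k=1}^n p_k f_k$ with the $f_k$ from Lemma~\ref{lem2.2} and verify the interpolation conditions. You have simply spelled out the one-line verification in more detail; there is nothing to add.
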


\begin{proof}
Using polynomials $f_{k}$ from the previous lemma, the polynomial
$$
f=\sum_{k=1}^{n}p_{k}f_{k}
$$
gives us the desired interpolant.
\end{proof}

Notice that the polynomial $f$ constructed above is the analog for varieties of the classical Lagrange interpolation polynomial.

\subsection{Interpolation on general varieties}\label{sec2.2}

\begin{lemma}\label{lem2.4}
Let $\mathcal{V}_1$ and $\mathcal{V}_2$ be two varieties in $\mathbb{C}^d$. Then the following are equivalent:
\begin{enumerate}
\item[\rm a)]  For every pair of polynomials $p_1,p_2\in\mathbb{C}\left[\mathbf{x}\right]$ such that
\begin{equation}
p_1(\mathbf{x})=p_2(\mathbf{x})\text{ for }\mathbf{x}\in\mathcal{V}_1\cap\mathcal{V}_2
\label{eq2.1}
\end{equation}
there exists a polynomial $f\in\mathbb{C}\left[\mathbf{x}\right]$ such that
\begin{equation}
f\mid\mathcal{V}_k=p_k\mid\mathcal{V}_k\text{ for }k=1,2.
\label{eq2.2}
\end{equation}

\item[\rm b)]  The ideal $J\left(\mathcal{V}_1\right)+J\left(\mathcal{V}_2\right)$ is radical.
\end{enumerate}
\end{lemma}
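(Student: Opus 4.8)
The plan is to translate both statements into ideal-membership conditions and observe that they coincide. Write $J_k := J(\mathcal{V}_k)$ for $k = 1,2$; these are radical ideals, and since $\mathcal{V}_1 \cap \mathcal{V}_2 = \mathcal{V}(J_1 + J_2)$, the Hilbert Nullstellensatz identifies $J(\mathcal{V}_1 \cap \mathcal{V}_2)$ with the radical $\sqrt{J_1 + J_2}$. The compatibility condition \eqref{eq2.1} says exactly that $p_1 - p_2$ vanishes on $\mathcal{V}_1 \cap \mathcal{V}_2$, i.e.\ that $p_1 - p_2 \in \sqrt{J_1 + J_2}$, while solvability of \eqref{eq2.2} will turn out to be equivalent to the a priori stronger statement $p_1 - p_2 \in J_1 + J_2$. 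Thus the two conditions in the lemma differ precisely by the gap between $J_1 + J_2$ and its radical, and the equivalence drops out once this dictionary is made explicit.

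For the implication b) $\Rightarrow$ a), assume $J_1 + J_2$ is radical, hence $J_1 + J_2 = \sqrt{J_1 + J_2} = J(\mathcal{V}_1 \cap \mathcal{V}_2)$. Given $p_1, p_2$ satisfying \eqref{eq2.1}, the difference $p_1 - p_2$ vanishes on $\mathcal{V}_1 \cap \mathcal{V}_2$, so we may write $p_1 - p_2 = g_1 + g_2$ with $g_1 \in J_1$ and $g_2 \in J_2$. The polynomial $f := p_1 - g_1 = p_2 + g_2$ is then the required interpolant: on $\mathcal{V}_1$ it agrees with $p_1$ because $g_1$ vanishes there, and on $\mathcal{V}_2$ it agrees with $p_2$ because $g_2$ vanishes there. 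This is the exact analog of the computation already used in Lemma \ref{lem2.1}.

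For a) $\Rightarrow$ b), it suffices to prove $\sqrt{J_1 + J_2} \subseteq J_1 + J_2$, i.e.\ that every polynomial $p$ vanishing on $\mathcal{V}_1 \cap \mathcal{V}_2$ lies in $J_1 + J_2$. Apply the hypothesis a) with $p_1 := p$ and $p_2 := 0$; the pair satisfies \eqref{eq2.1} precisely because $p$ vanishes on $\mathcal{V}_1 \cap \mathcal{V}_2$, so there is an $f$ with $f \mid \mathcal{V}_1 = p \mid \mathcal{V}_1$ and $f \mid \mathcal{V}_2 = 0$. The second equation gives $f \in J(\mathcal{V}_2) = J_2$, and the first gives $f - p \in J(\mathcal{V}_1) = J_1$, whence $p = f - (f - p) \in J_1 + J_2$. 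I do not anticipate a genuine obstacle: the entire content is in recognizing the correspondence between the interpolation problem and arithmetic of the ideals $J_1$, $J_2$, the one point needing care being the use of $\mathcal{V}(J_1 + J_2) = \mathcal{V}_1 \cap \mathcal{V}_2$ together with the Nullstellensatz to pass between a radical ideal and the polynomials vanishing on its variety.
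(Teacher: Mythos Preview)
Your proof is correct and follows essentially the same approach as the paper's: both directions hinge on writing $p_1-p_2=g_1+g_2$ with $g_k\in J_k$ and defining the interpolant as $p_2+g_2$, and the converse in both cases tests the data $p_1=g$, $p_2=0$ to force $g\in J_1+J_2$. The only cosmetic difference is that you argue a) $\Rightarrow$ b) directly by showing $\sqrt{J_1+J_2}\subseteq J_1+J_2$, whereas the paper phrases the same computation as a contrapositive.
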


\begin{proof}
Assume that the ideal $J\left(\mathcal{V}_1\right)+J\left(\mathcal{V}_2\right)$ is radical. Since $\mathcal{V}\left(J\left(\mathcal{V}_1\right)+J\left(\mathcal{V}_2\right)\right)=\mathcal{V}_1\cap\mathcal{V}_2$, it follows that $p_1-p_2$ vanishes on $\mathcal{V}\left(J\left(\mathcal{V}_1\right)+J(\mathcal{V}_2)\right)$ and by the Hilbert Nullstellensatz some power $\left(p_1-p_2\right)^n\in J\left(\mathcal{V}_1\right)+J\left(\mathcal{V}_2\right)$. But since $J\left(\mathcal{V}_1\right)+J\left(\mathcal{V}_2\right)$ is radical, it follows that there exist polynomials $g_1\in J\left(\mathcal{V}_1\right)$ and $g_2\in J\left(\mathcal{V}_2\right)$ such that $p_1-p_2=g_1+g_2$.

Now the polynomial $g_1\mid\mathcal{V}_1=0$ hence $g_2(\mathbf{x})=p_1(\mathbf{x})-p_2(\mathbf{x})$ for $\mathbf{x}\in\mathcal{V}_1$; on the other hand $g_2\mid\mathcal{V}_2=0$ hence
$$
f:=g_{2}+p_{2}
$$
satisfies the interpolation condition \eqref{eq2.2}.

Next, assume that $J\left(\mathcal{V}_1\right)+J\left(\mathcal{V}_2\right)$ is not radical. Then there exists a non-zero polynomial $g\in\mathbb{C}
\left[x_1,\dotsc,x_d\right]$ such that $g^{n}\in J\left(\mathcal{V}_1\right)+J\left(\mathcal{V}_2\right)$ for some $n>1$ but $g\notin J\left(\mathcal{V}_1\right)+J\left(\mathcal{V}_2\right)$. Choose $p_1=g$ and $p_2=0$.

Then, since $g^n\in J\left(\mathcal{V}_1\right)+J\left(\mathcal{V}_2\right)$, it follows that $g\mid\left(\mathcal{V}_1\cap\mathcal{V}_2\right)=0$ and \eqref{eq2.1} holds. Let $f\in\mathbb{C}\left[x_1,\dotsc,x_d\right]$ be such that \eqref{eq2.2} holds. Then $f\mid\mathcal{V}_2=0$ and, since $J\left(\mathcal{V}_2\right)$ is radical it follows that $f\in J\left(\mathcal{V}_2\right)$.

On the other hand $(g-f)\mid\left(\mathcal{V}_1\right)=0$ and since $J\left(\mathcal{V}_1\right)$ is radical we conclude that $(g-f)\in J\left(\mathcal{V}_1\right)$. But this implies that $g=(g-f)+f\in J\left(\mathcal{V}_1\right)+J\left(\mathcal{V}_2\right)$ which contradicts our assumption on $g$.
\end{proof}

This lemma explains Example \ref{exam1.1} from the Introduction. The ideals there are $J(\mathcal{V}_1)=\left\langle x^2-y\right\rangle$ and $J\left(\mathcal{V}_2\right)=\langle y\rangle$ are both radical, yet $J\left(\mathcal{V}_1\right)+J\left(\mathcal{V}_2\right)=\left\langle x^2-y,y\right\rangle=\left\langle x^2,y\right\rangle$ contains $x^2$ but not $x$, hence is not radical.

\begin{theorem}\label{thm2.5}
Let $\mathcal{V}_1,\dotsc,\mathcal{V}_n$ be a collection of varieties in $\mathbb{C}^d$. The following are equivalent:
\begin{enumerate}
\item[\rm a)]  For every collection of polynomials $p_1,\dotsc,p_n\in\mathbb{C}\left[\mathbf{x}\right]$ such that%
\begin{equation}
p_k(\mathbf{x})=p_j(\mathbf{x})\text{ for all }\mathbf{x}\in\mathcal{V}_k\cap\mathcal{V}_j\text{ for all }k,j=1,\dotsc,n
\label{eq2.3}
\end{equation}
there exists a polynomial $f$ such that $f\mid\mathcal{V}_k=p_k\mid\mathcal{V}_k$ for $k=1,\dotsc,n$.

\item[\rm b)]  For every $m<n$ the ideal $J\left(\bigcup\limits_{j=1}^m\mathcal{V}_j\right)+J\left(\mathcal{V}_m\right)$ is radical.
\end{enumerate}
\end{theorem}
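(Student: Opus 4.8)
The plan is to prove both implications at once, by induction on $n$, each step collapsing to the two-variety case of Lemma~\ref{lem2.4}. For $n=1$ the assertion is trivial and for $n=2$ it is precisely Lemma~\ref{lem2.4}; so assume $n\ge 3$ and that the equivalence holds for every collection of fewer than $n$ varieties, and set $W_m:=\bigcup_{j=1}^m\mathcal V_j$. The organizing remark is that the radicality conditions in (b) for $\mathcal V_1,\dots,\mathcal V_n$ are exactly those in (b) for the sub-collection $\mathcal V_1,\dots,\mathcal V_{n-1}$, together with the one further condition that $J(W_{n-1})+J(\mathcal V_n)$ be radical; and this last condition, by Lemma~\ref{lem2.4} applied to the two varieties $W_{n-1}$ and $\mathcal V_n$, is equivalent to solvability of the interpolation problem on the pair $(W_{n-1},\mathcal V_n)$.

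For (b)$\Rightarrow$(a), which I expect to be the routine direction: assuming (b), the inductive hypothesis gives (a) for $\mathcal V_1,\dots,\mathcal V_{n-1}$, and Lemma~\ref{lem2.4} gives solvability on $(W_{n-1},\mathcal V_n)$. Given data satisfying \eqref{eq2.3}, first produce $g$ with $g\mid\mathcal V_k=p_k\mid\mathcal V_k$ for $k\le n-1$; on each $\mathcal V_k\cap\mathcal V_n$ one has $g=p_k=p_n$, so $g$ and $p_n$ agree on $W_{n-1}\cap\mathcal V_n$, and solvability on the pair yields $f$ with $f\mid W_{n-1}=g\mid W_{n-1}$ and $f\mid\mathcal V_n=p_n\mid\mathcal V_n$; this $f$ interpolates all the data. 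Unwound, this is the ``greedy'' procedure that glues $\mathcal V_1,\dots,\mathcal V_m$ into $W_m$ one variety at a time, the $m$th amalgamation being justified through Lemma~\ref{lem2.4} by the $m$th radicality hypothesis.

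For (a)$\Rightarrow$(b) I would again induct, and the argument breaks into two parts. First, (a) for $\mathcal V_1,\dots,\mathcal V_n$ directly gives solvability on $(W_{n-1},\mathcal V_n)$, hence ``$J(W_{n-1})+J(\mathcal V_n)$ radical'': given $q_1,q_2$ agreeing on $W_{n-1}\cap\mathcal V_n$, apply (a) to the data $p_1=\cdots=p_{n-1}=q_1$, $p_n=q_2$, which satisfies \eqref{eq2.3} because every intersection to be checked lies inside $W_{n-1}\cap\mathcal V_n$; the resulting interpolant restricts to $q_1$ on $W_{n-1}$ and to $q_2$ on $\mathcal V_n$. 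Second, one must derive (a) for the sub-collection $\mathcal V_1,\dots,\mathcal V_{n-1}$, whereupon the inductive hypothesis supplies the remaining conditions in (b). The natural route is: from compatible $p_1,\dots,p_{n-1}$, manufacture a polynomial $p_n$ on $\mathcal V_n$ compatible with all of them --- equivalently, solve interpolation on the $n-1$ varieties $\mathcal V_1\cap\mathcal V_n,\dots,\mathcal V_{n-1}\cap\mathcal V_n$ with the (automatically compatible) data $p_k\mid(\mathcal V_k\cap\mathcal V_n)$ --- and then invoke (a) for the full collection on $p_1,\dots,p_n$.

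The hard part, I expect, will be exactly this propagation of solvability to sub-collections: the radicality hypotheses in hand are attached to the $\mathcal V_k$, not to their slices $\mathcal V_k\cap\mathcal V_n$, so the auxiliary slice problem is not handed to us by the inductive hypothesis. I would try to close the gap either by a nested induction on the slice problem, using the identities $J(\mathcal V_i\cup\mathcal V_j)=J(\mathcal V_i)\cap J(\mathcal V_j)$ and $\mathcal V(J_1+J_2)=\mathcal V(J_1)\cap\mathcal V(J_2)$ to tie its ideals back to those in (b); or, failing that, by arguing (a)$\Rightarrow$(b) in contrapositive form, building from a witness $g$ with $g^N\in J(W_m)+J(\mathcal V_{m+1})$ but $g\notin J(W_m)+J(\mathcal V_{m+1})$ a tuple satisfying \eqref{eq2.3} for $\mathcal V_1,\dots,\mathcal V_n$ with no interpolant --- taking the entries equal to $g$ on the $W_m$-part and to $0$ on the $\mathcal V_{m+1}$-part of each $\mathcal V_k$, interpolated arbitrarily elsewhere --- so that any interpolant would place $g$ inside $J(W_m)+J(\mathcal V_{m+1})$, a contradiction.
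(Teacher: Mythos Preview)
Your (b)$\Rightarrow$(a) is exactly the paper's argument: induct on $n$, interpolate on $\mathcal V_1,\dots,\mathcal V_{n-1}$ by the inductive hypothesis to get a polynomial $g$, check that $g$ and $p_n$ agree on $W_{n-1}\cap\mathcal V_n$, and invoke Lemma~\ref{lem2.4} on the pair $(W_{n-1},\mathcal V_n)$ to produce the final interpolant.

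For (a)$\Rightarrow$(b), the paper takes your contrapositive route, but far more tersely than you anticipate. If $J(W_{m-1})+J(\mathcal V_m)$ is not radical, Lemma~\ref{lem2.4} supplies $f_1$ on $W_{m-1}$ and $p_m$ on $\mathcal V_m$ that agree on the intersection yet admit no common interpolant; the paper then sets $p_j=f_1$ for $j\le m-1$ and writes ``we obtain our result'', never specifying $p_{m+1},\dots,p_n$ nor arguing that compatible choices exist. In other words, the very step you single out as the hard part---extending bad data on $\mathcal V_1,\dots,\mathcal V_m$ to data on all $n$ varieties still satisfying~\eqref{eq2.3}, or equivalently showing that (a) for the full collection forces (a) for sub-collections---is simply not addressed in the paper's text. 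Your caution is well placed: neither of your two proposed fixes (a nested induction on the slice varieties $\mathcal V_k\cap\mathcal V_n$, or manufacturing compatible $p_k$ for $k>m$ in the contrapositive) follows automatically from the hypotheses, and the paper supplies no additional device to close the gap.
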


\begin{proof}
Assume that b) holds. We prove a) by induction on $n$. The result is obviously true for $n=1$. Assuming that it is true for some $n$, let $\mathcal{V}_1,\dotsc,\mathcal{V}_n,\mathcal{V}_{n+1}$ be a collection of varieties in $\mathbb{C}^d$ such that for every $m<n+1$ the ideal $J\left(\bigcup\limits_{j=1}^m\mathcal{V}_j\right)+J\left(\mathcal{V}_m\right)$ is radical. Let $p_1,\dotsc,p_n,p_{n+1}\in\mathbb{C}\left[\mathbf{x}\right]$ be such that
$$
p_k\mid\left(\mathcal{V}_k\cap\mathcal{V}_j\right)
=p_j\mid\left(\mathcal{V}_k\cap\mathcal{V}_j\right)
\text{ for all }k,j=1,\dotsc,n+1
$$
then, by inductive assumption, there exists a polynomial $f_1$ such that $f_1\mid\mathcal{V}_k=p_k\mid\mathcal{V}_k$ for $k=1,\dotsc,n$. From \eqref{eq2.3} it follows that
\begin{equation}
f_1=p_{n+1}\text{ on }\left(\bigcup_{j=1}^n\mathcal{V}_{j}\right)\cap\mathcal{V}_{n+1}
\label{eq2.4}
\end{equation}
and, since $J\left(\bigcup\limits_{j=1}^n\mathcal{V}_j\right)+J\left(\mathcal{V}_{n+1}\right)$ is radical, Lemma \ref{lem2.4} implies that there exists a polynomial $f$ such that
$$
f=f_1\text{ on }\bigcup_{j=1}^n\mathcal{V}_j\text{ and }f\mid\mathcal{V}_{n+1}
=p_{n+1}\mid\mathcal{V}_{n+1}
$$
and, by \eqref{eq2.4}, the result follows.

The converse follows immediately from Lemma \ref{lem2.4}. Indeed if $J\left(\bigcup\limits_{j=1}^{m-1}\mathcal{V}_j\right)+J\left(\mathcal{V}_m\right)$ is not radical then we can choose a polynomial $f_1$ on $\bigcup\limits_{j=1}^{m-1}\mathcal{V}_j$ and $p_m$ on $\mathcal{V}_m$ such that no polynomial $f$ interpolates $f_1$ and $p_m$. Choosing $p_j=f_1$ for all $j=1,\dotsc m-1$ we obtain our result.
\end{proof}

A couple of remarks are in order:
\begin{enumerate}
\item[1)]  Unfortunately the condition b) in the previous theorem does not follow from ``the pairwise version'' of this condition: $J\left(\mathcal{V}_j\right)+J\left(\mathcal{V}_k\right)$ is radical for all $j,k=1,\dotsc,n$. Indeed, consider the following three varieties in $\mathbb{C}^2:\mathcal{V}_1=\mathcal{V}(\langle x\rangle)$, $\mathcal{V}_2=\mathcal{V}(\langle y\rangle)$ and $\mathcal{V}_3=\mathcal{V}(\langle x-y\rangle)$. Then $J\left(\mathcal{V}_1\cup
\mathcal{V}_2\right)=\mathcal{V}(\langle xy\rangle)$ and $J\left(\mathcal{V}_1\cup\mathcal{V}_2\right)+J\left(\mathcal{V}_3\right)=\langle xy,x-y\rangle$. This ideal contains $x^2=x(x-y)+xy$ yet does not contain $x$ and therefore is not a radical ideal.

\item[2)]  Notice that the statement a) in the theorem does not depend on ordering the varieties $\mathcal{V}_1,\dotsc,\mathcal{V}_n$ but the statement b), formally speaking, does. Hence the statement b) is equivalent to the following more general statement:
\begin{enumerate}
\item[c)]  For every proper subset $M\subset\{1,\dotsc,n\}$ and any $k\notin M$ the ideals $J\left(\cup_{m\in M}\mathcal{V}_m\right)+J\left(\mathcal{V}_k\right)$ is radical.
\end{enumerate}
\end{enumerate}

\subsection{Interpolation with restricted data}\label{sec2.3}

In this subsection we will address the case when the condition b) of Theorem \ref{thm2.5} does not hold. We will start with the lemma for two varieties.

\begin{lemma}\label{lem2.6}
Let $\mathcal{V}_1$ and $\mathcal{V}_2$ be two varieties in $\mathbb{C}^d$ and let $p_1,p_2$ be the data on these varieties. Then there exists a polynomial $f$ such that
\begin{equation}
f\mid\mathcal{V}_k=p_k\mid\mathcal{V}_k\text{ for }k=1,2,
\label{eq2.5}
\end{equation}
if and only if for every $\lambda\in\left[J(\mathcal{V}_1)+J\left(\mathcal{V}_2\right)\right]^{\bot}$ we have $\lambda(p_1)=\lambda(p_2)$.

Moreover, in this case any $f$ satisfying \eqref{eq2.5} also satisfies the condition
$$
\lambda(f)=\lambda\left(p_1\right)=\lambda\left(p_2\right)
$$
for every $\lambda\in\left[J\left(\mathcal{V}_1\right)+J\left(\mathcal{V}_2\right)\right]^{\bot}$.
\end{lemma}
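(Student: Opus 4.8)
The plan is to reduce the existence of an interpolant to a single ideal-membership statement and then dualize it; interpreting $[\,\cdot\,]^{\bot}$ as the annihilator inside the full algebraic dual $\mathbb{C}[\mathbf{x}]^{*}$.

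First I would note that, since $J(\mathcal{V}_k)$ is by definition the set of all polynomials vanishing on $\mathcal{V}_k$, condition \eqref{eq2.5} says exactly that $f-p_{1}\in J(\mathcal{V}_1)$ and $f-p_{2}\in J(\mathcal{V}_2)$. I then claim such an $f$ exists if and only if $p_{1}-p_{2}\in I:=J(\mathcal{V}_1)+J(\mathcal{V}_2)$. For necessity, given such an $f$ write $p_{1}-p_{2}=(p_{1}-f)+(f-p_{2})$, a sum of an element of $J(\mathcal{V}_1)$ and an element of $J(\mathcal{V}_2)$. For sufficiency, write $p_{1}-p_{2}=g_{1}+g_{2}$ with $g_{k}\in J(\mathcal{V}_k)$ and set $f:=p_{1}-g_{1}=p_{2}+g_{2}$; then $f-p_{1}=-g_{1}\in J(\mathcal{V}_1)$ and $f-p_{2}=g_{2}\in J(\mathcal{V}_2)$. (This is exactly the construction already used in the proof of Lemma \ref{lem2.4}, only without invoking the Nullstellensatz.)

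Next I would invoke the elementary duality fact: for a subspace $W$ of a vector space $V$ over $\mathbb{C}$ and $q\in V$, one has $q\in W$ if and only if $\lambda(q)=0$ for every $\lambda\in W^{\bot}=\{\lambda\in V^{*}:\lambda\mid W=0\}$ — the nontrivial direction being that if $q\notin W$ one extends a basis of $W$, together with $q$, to a basis of $V$ and lets $\lambda$ be the coordinate functional dual to $q$. Applying this with $V=\mathbb{C}[\mathbf{x}]$, $W=I$, and $q=p_{1}-p_{2}$ converts ``$p_{1}-p_{2}\in I$'' into ``$\lambda(p_{1}-p_{2})=0$ for all $\lambda\in I^{\bot}$'', i.e. ``$\lambda(p_{1})=\lambda(p_{2})$ for all $\lambda\in I^{\bot}$'', which is the stated criterion. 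For the \emph{moreover} clause: if $f$ satisfies \eqref{eq2.5} then $f-p_{1}\in J(\mathcal{V}_1)\subseteq I$ and $f-p_{2}\in J(\mathcal{V}_2)\subseteq I$, so any $\lambda\in I^{\bot}$ kills both, giving $\lambda(f)=\lambda(p_{1})=\lambda(p_{2})$.

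The only delicate point is that $\mathbb{C}[\mathbf{x}]$ is infinite dimensional, so the duality step relies on a basis-extension (Zorn's lemma) argument rather than on finite-dimensional linear algebra; this causes no trouble since $I^{\bot}$ is taken in the full algebraic dual and no finiteness or topology is required. I do not expect any genuine obstacle: unlike Lemma \ref{lem2.4}, this statement needs only the definition of $J(\mathcal{V})$ and the double-annihilator identity.
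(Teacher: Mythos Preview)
Your proof is correct and follows essentially the same route as the paper: reduce \eqref{eq2.5} to the ideal-membership condition $p_1-p_2\in J(\mathcal{V}_1)+J(\mathcal{V}_2)$, dualize via the annihilator, and build the interpolant as $f=p_2+g_2=p_1-g_1$. If anything, your write-up is more complete than the paper's --- you argue both directions of the equivalence explicitly and flag the Zorn's-lemma issue behind the double-annihilator step, whereas the paper spells out only the ``if'' direction and handles the ``moreover'' clause for arbitrary $f$ slightly more indirectly (first for the constructed $f$, then by comparing any other interpolant to it).
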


\begin{proof}
If $\lambda\left(p_1\right)=\lambda\left(p_2\right)$ for all $\lambda\in\left[J\left(\mathcal{V}_1\right)+J\left(\mathcal{V}_2\right)\right]^{\bot}$ then $\lambda\left(p_1-p_2\right)=0$ and hence $p_1-p_2\in J\left(\mathcal{V}_1\right)+J\left(\mathcal{V}_2\right)$. Hence there exist $g_1\in J\left(\mathcal{V}_1\right)$ and $g_{2}\in J(\mathcal{V}_{2})$ such that
$$
p_1-p_2=g_1+g_2.
$$
Once again choosing $f:=p_2+g_2$ we deduce that $f=p_2+g_2=p_1-g_1$ interpolates $p_2$ on $\mathcal{V}_2$ since $g_2$ vanishes on $\mathcal{V}_2$ and interpolates $p_1$ on $\mathcal{V}_1$ since $g_1$ vanishes on $\mathcal{V}_1$.

For the ``moreover'' part, observe that $f$ constructed above satisfies $f-p_2=g_2\in J\left(\mathcal{V}_2\right)\subset J\left(\mathcal{V}_1\right)+J\left(\mathcal{V}_2\right)$, hence $\lambda\left(f-p_2\right)=0$ for all $\lambda\in\left[J\left(\mathcal{V}_1\right)+J\left(\mathcal{V}_2\right)\right]^{\bot}$. Similarly, $f-p_1=-g_1\in J\left(\mathcal{V}_1\right)\subset J\left(\mathcal{V}_1\right)+J\left(\mathcal{V}_2\right)$ and $f-p_2=g_2\in J\left(\mathcal{V}_2\right)\subset J\left(\mathcal{V}_1\right)+J\left(\mathcal{V}_2\right)$. Finally if $f_1$ also satisfies \eqref{eq2.5} then $\left(f-f_1\right)\mid\mathcal{V}_1=0$ and,
since $J\left(\mathcal{V}_1\right)$ is a radical ideal, $\left(f-f_1\right)\in J\left(\mathcal{V}_1\right)\subset J\left(\mathcal{V}_1\right)+J\left(\mathcal{V}_2\right)$.
\end{proof}

\begin{remark}
Returning to the example in the introduction, $\mathcal{V}_1=\left\{(x,y)\in\Bbbk^2:y=x^2\right\}$ and $\mathcal{V}_2=\{(x,y)\in\Bbbk^2:y=0\}=\left\{(x,0)\in\Bbbk^2\right\}$ we see that $J\left(\mathcal{V}_1\right)+J\left(\mathcal{V}_2\right)=\left\langle x^2,y\right\rangle$ is not radical, the functional $\lambda(g):\frac{\partial g}{\partial x}(0,0)$ belongs to $[J(\mathcal{V}_{1})+J(\mathcal{V}_{2})]^{\bot}$ hence the data $p_1,p_2$ must satisfy the two conditions: $\left(p_1-p_2\right)(0,0)=\frac{\partial\left(p_1-p_2\right)}{\partial x}(0,0)=0$. The second of these condition was not satisfied by the data considered in the example.
\end{remark}

Unfortunately, at this moment, we do not have a good condition on the data $\left(p_j\right)$ that would be equivalent to the solvability of an interpolation problem for more than two varieties. Here is a sufficient condition:

\begin{theorem}\label{thm2.8}
Let $J_i:=J\left(\mathcal{V}_i\right)$, $i=1,\dotsc,n$ be given and $p_1,\dotsc,p_n$ be the polynomial data on the varieties $\mathcal{V}_i$. Suppose that for any $i,j$
$$
\left(p_i-p_j\right)\in J_i+\bigcap_{k\ne i}J_k.
$$
Then the interpolation problem has a solution.
\end{theorem}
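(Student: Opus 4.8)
The plan is to reduce the statement, exactly as in the earlier results, to a single membership assertion: since each $J_i=J(\mathcal{V}_i)$ is radical, a polynomial $f$ satisfies $f\mid\mathcal{V}_i=p_i\mid\mathcal{V}_i$ for all $i$ \emph{if and only if} $f-p_i\in J_i$ for all $i$. So the task is to manufacture such an $f$ from the hypothesis $p_i-p_j\in J_i+\bigcap_{k\ne i}J_k$.

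My first attempt would be to imitate the induction in Theorem \ref{thm2.5}: peel off $\mathcal{V}_n$, produce by the inductive step a polynomial $g$ with $g\mid\mathcal{V}_j=p_j\mid\mathcal{V}_j$ for $j<n$ (the hypothesis does pass to subfamilies, since $J_i+\bigcap_{k\ne i}J_k\subseteq J_i+\bigcap_{k\in M,\,k\ne i}J_k$), and then glue $g$ with $p_n$ via Lemma \ref{lem2.6}. The obstacle in this route is the last step: Lemma \ref{lem2.6} requires $p_n-g\in\bigl(\bigcap_{j<n}J_j\bigr)+J_n$, but the natural estimates only give $p_n-g\in\bigcap_{j<n}(J_j+J_n)$, and this intersection need not coincide with $\bigl(\bigcap_{j<n}J_j\bigr)+J_n$ (precisely the three-lines phenomenon in Remark 1 following Theorem \ref{thm2.5}). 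So I would instead write $f$ down explicitly.

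The explicit construction I propose is the following "Lagrange-type" formula. Fix the index $1$ and apply the hypothesis to the pairs $(i,1)$, for $i=2,\dots,n$: write
$$p_i-p_1=\alpha_i+\beta_i,\qquad \alpha_i\in J_i,\quad \beta_i\in\bigcap_{k\ne i}J_k,$$
and set
$$f:=p_1+\sum_{i=2}^{n}\beta_i.$$
To check $f-p_j\in J_j$: if $i\ne j$ then $\beta_i\in\bigcap_{k\ne i}J_k\subseteq J_j$, so modulo $J_j$ only the $i=j$ summand survives (for $j=1$ the whole sum lies in $J_1$ term by term, and we are done). For $j\ge 2$ this gives $f-p_j\equiv (p_1-p_j)+\beta_j\pmod{J_j}$, and by the very choice of the decomposition $(p_1-p_j)+\beta_j=-\alpha_j\in J_j$. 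Hence $f-p_j\in J_j$ for every $j$, which is the assertion.

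So in fact there is no real obstacle once the right formula is spotted; the only "idea" needed is to read the $\beta_i$ off the hypothesis and observe the cancellation $(p_1-p_j)+\beta_j=-\alpha_j$. I would also point out afterward that the construction only consumes the case $j=1$ of the hypothesis, so the symmetric-looking condition "$p_i-p_j\in J_i+\bigcap_{k\ne i}J_k$ for all $i,j$" is stronger than what the proof uses; and that $f$ is again the analogue of the cardinal-function formula of Subsection \ref{sec2.1}, with the $\beta_i$ in the role of the $f_k$ there.
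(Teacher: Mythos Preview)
Your argument is correct. Both the paper and you build the interpolant as ``basepoint plus corrections lying in $\bigcap_{k\ne i}J_k$'', so the skeleton is identical; the difference is the choice of basepoint. The paper takes the symmetric basepoint $b=\tfrac1n\sum_k p_k$, shows $f_i:=p_i-b=\tfrac1n\sum_{j\ne i}(p_i-p_j)\in J_i+\bigcap_{k\ne i}J_k$, decomposes $f_i=a_i+g_i$ and sets $f=b+\sum_i g_i$. You take the asymmetric basepoint $b=p_1$, so that $p_1-b=0$ and only $n-1$ decompositions $p_i-p_1=\alpha_i+\beta_i$ are needed, giving $f=p_1+\sum_{i\ge 2}\beta_i$. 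Your version is a bit leaner and, as you note, consumes only the instances $(i,1)$ of the hypothesis, showing that the full symmetric assumption ``for all $i,j$'' is more than the proof requires; the paper's symmetric formula, on the other hand, makes no index distinguished and matches the Lagrange presentation in Subsection~\ref{sec2.1} more visibly.
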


\begin{proof}
Let
$$
f_i:=p_i-\frac1n\left(\sum_{j=1}^np_j\right)
$$
then $f_i\in J_i+\bigcap\limits_{k\ne i}J_k$. Indeed $f_i=\frac1n\sum\limits_{j\ne i}^n\left(p_i-p_j\right)$ and each term in the sum belongs to $J_{i}+\bigcap\limits_{k\ne i}J_{j}$ hence the sum belongs to that ideal.

By Lemma \ref{lem2.6} this implies that there exists a polynomial $g_i$ such that $g_i-f_i\in J_i$ and $g_i\in\bigcap\limits_{k\ne i}J_k$.

Consider the polynomial%
$$
f:=\frac1n\left(\sum_{k=1}^np_k\right)+\sum_{i=1}^ng_i.
$$
Notice that each polynomial $g_{i}$ vanishes on $\mathcal{V}_{j}$ for $j\ne i$. Hence $f\mid\mathcal{V}_j=\left(\frac1n\left(\sum\limits_{k=1}^np_k\right)+g_j\right)
\mid\mathcal{V}_j=\left(\frac1n\left(\sum\limits_{k=1}^np_k\right)+f_j\right)\mid\mathcal{V}_j$, (since $f_j-p_j=0$ on $\mathcal{V}_j$) and this shows that $f\mid\mathcal{V}_j=\left(\frac1n\left(\sum\limits_{k=1}^np_k\right)+p_j-\frac1n\left(\sum\limits_{k=1}^np_k\right)\right)\mid\mathcal{V}_j=p_j\mid \mathcal{V}_j$.
\end{proof}

To show that this condition is not necessary consider again the three varieties in $\mathbb{C}^3$ that are defined by ideals $J_1=\langle z-x\rangle$, $J_2=\langle z-y\rangle$ and $J_3=\langle x,y\rangle$. The function $f(x,y,z)=z$ interpolates $p_1(x,y,z)=x$ on $\mathcal{V}\left(J_1\right)$, it interpolates $p_2(x,y,z)=y$ on $\mathcal{V}\left(J_2\right)$ and, of course, $p_3(x,y,z)=z$ on $\mathcal{V}\left(J_3\right)$. On the other hand $p_3-p_2=z-x$ does not belong to the ideal $\left(J_1\cap J_2\right)+J_3=\langle (z-x)(z-y)\rangle+\langle x,y\rangle=\left\langle z^2-zx-zy+xy,x,y\right\rangle=\left\langle x,y,z^2\right\rangle$.

\section{Interpolation and boundary value problems for PDEs}\label{sec3}

In this section we will consider interpolation on varieties not by the general polynomials but by polynomials that belong to a particular subspace of polynomials, namely subspace formed by the solutions of linear homogeneous differential equations. In particular we will extend result of W. K. Hayman and Z. G. Shanidze \cite{Hay:Sha} who considered this problem only for homogeneous quadratic differential equations in two variables.

For a polynomial $L\left(x_1,\dotsc,x_d\right)\in\mathbb{C}\left[\mathbf{x}\right]$ we use $L(\mathbf{D})$ to denote the differential operator obtained by formally replacing the variables $x_1,\dotsc,x_d$ in the polynomial $L$ with partial derivatives $D_1,\dotsc,D_d$.

We will need the following:

\begin{theorem}[Matsuura \cite{Mat}]\label{thm3.1}
Let $L$ be a polynomial in $d$ variables and the lowest degree of monomial in $L(\mathbf{x})$ is $l$. Let $\mathbb{C}_{\le n}[\mathbf{x}]$ be the polynomials of degree at most $n$. Then
$$
L(\mathbf{D})\mathbb{C}_{\le n}[\mathbf{x}]
=\mathbb{C}_{\le n-l}[\mathbf{x}]
$$
\end{theorem}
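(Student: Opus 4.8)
The plan is to prove the two inclusions $L(\mathbf{D})\,\mathbb{C}_{\le n}[\mathbf{x}]\subseteq\mathbb{C}_{\le n-l}[\mathbf{x}]$ and $\mathbb{C}_{\le n-l}[\mathbf{x}]\subseteq L(\mathbf{D})\,\mathbb{C}_{\le n}[\mathbf{x}]$ separately, and in fact to reduce everything to the single homogeneous case where $L$ itself is a form of degree $l$. The first inclusion is the easy, degree-counting direction: if $q$ is a monomial of degree $\le n$ and $L^{(l)}$ denotes the lowest-degree part of $L$ (a nonzero form of degree $l$), then applying any monomial differential operator $\mathbf{D}^{\alpha}$ with $|\alpha|\ge l$ to $q$ drops the degree by at least $l$, and applying $\mathbf{D}^{\alpha}$ with $|\alpha|>l$ drops it by more, so $L(\mathbf{D})q\in\mathbb{C}_{\le n-l}[\mathbf{x}]$. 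Summing over monomials gives $L(\mathbf{D})\,\mathbb{C}_{\le n}[\mathbf{x}]\subseteq\mathbb{C}_{\le n-l}[\mathbf{x}]$.

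The real content is surjectivity onto $\mathbb{C}_{\le n-l}[\mathbf{x}]$, and here I would argue degree by degree. Write $L=L^{(l)}+L^{(l+1)}+\dotsb$ where $L^{(j)}$ is the degree-$j$ homogeneous part and $L^{(l)}\ne 0$. The key reduction is: it suffices to show that the \emph{homogeneous} operator $L^{(l)}(\mathbf{D})$ maps the space $\mathcal{H}_m$ of forms of degree $m$ \emph{onto} $\mathcal{H}_{m-l}$ for every $m\ge l$. Granting that, one proves $\mathbb{C}_{\le n-l}[\mathbf{x}]\subseteq L(\mathbf{D})\,\mathbb{C}_{\le n}[\mathbf{x}]$ by a descending induction on degree: given a target $r$ of degree $\le n-l$, pick its top homogeneous part $r^{(k)}$ with $k\le n-l$, use surjectivity of $L^{(l)}(\mathbf{D})$ on forms to find a form $s$ of degree $k+l\le n$ with $L^{(l)}(\mathbf{D})s=r^{(k)}$; then $L(\mathbf{D})s=r^{(k)}+(\text{lower-degree terms})$, all of degree $<k$ and still $\le n-l$, so by induction the remaining lower-degree discrepancy $r-L(\mathbf{D})s$ lies in $L(\mathbf{D})\,\mathbb{C}_{\le n}[\mathbf{x}]$, and we add the two preimages. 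The base case (degree $-1$, i.e.\ the zero polynomial) is trivial.

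So the whole theorem rests on one homogeneous statement: for a nonzero form $Q$ of degree $l$, the map $Q(\mathbf{D})\colon\mathcal{H}_m\to\mathcal{H}_{m-l}$ is surjective. This is the main obstacle, and the cleanest route is the classical \emph{apolarity / Fischer pairing} argument. Equip $\mathbb{C}[\mathbf{x}]$ with the bilinear pairing $\langle p,q\rangle:=\big(p(\mathbf{D})\,q\big)(0)$, under which distinct monomials are orthogonal and $\langle \mathbf{x}^\alpha,\mathbf{x}^\alpha\rangle=\alpha!>0$; in particular this pairing is nondegenerate on each $\mathcal{H}_m$. The adjoint of "multiply by $Q$" $\colon\mathcal{H}_{m-l}\to\mathcal{H}_m$ with respect to this pairing is exactly $Q(\mathbf{D})\colon\mathcal{H}_m\to\mathcal{H}_{m-l}$, since $\langle Q p,\,q\rangle=\langle p,\,Q(\mathbf{D})q\rangle$. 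Multiplication by a nonzero polynomial is injective (the polynomial ring is a domain), so its adjoint $Q(\mathbf{D})$ is surjective by nondegeneracy of the pairing and finite-dimensionality of the spaces involved. This closes the gap; I would isolate the homogeneous surjectivity as a preliminary lemma and then assemble the two inclusions as above.
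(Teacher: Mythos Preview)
The paper does not actually prove Theorem~\ref{thm3.1}; it is quoted from Matsuura~\cite{Mat} and used as a black box in the proofs of Theorems~\ref{thm3.2} and~\ref{thm3.4}. So there is no in-paper argument to compare your proposal against.

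That said, your proof is correct. The degree-counting inclusion is immediate, and your reduction of the surjectivity statement to the homogeneous case---peeling off the top form of the target, hitting it with a homogeneous preimage, and inducting downward on the degree of the residual---is clean and standard. The heart of the matter, surjectivity of $Q(\mathbf{D})\colon\mathcal{H}_m\to\mathcal{H}_{m-l}$ for a nonzero form $Q$ of degree $l$, you handle with the apolarity (Fischer) pairing: multiplication by $Q$ is injective because $\mathbb{C}[\mathbf{x}]$ is an integral domain, and on finite-dimensional spaces with a nondegenerate pairing, injectivity of a map forces surjectivity of its adjoint. All of this is sound.

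It is worth pointing out that the paper itself introduces a Hermitian variant of this very pairing immediately after stating Theorem~\ref{thm3.1}, in the run-up to Theorem~\ref{thm3.4}, and records the adjoint identity~\eqref{eq3.3}. So although the authors do not supply a proof of Matsuura's theorem, your argument is entirely in the spirit of the surrounding material; the machinery you invoke is essentially set up a few paragraphs later.
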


\begin{theorem}\label{thm3.2}
Let $J=\langle q\rangle$ be a radical ideal generated by a polynomial $q\in\mathbb{C}\left[\mathbf{x}\right]$ of degree $l$. Let $L$ be a polynomial in $d$ variables and the lowest degree of non-vanishing monomial in $L(\mathbf{x})$ is $l$. If
\begin{equation}
\ker L(\mathbf{D})\cap J=\{0\}
\label{eq3.1}
\end{equation}
then for any polynomial $p$ there exists unique polynomial $f\in\ker L(\mathbf{D})$ such that $f\mid\mathcal{V}(J)=p\mid\mathcal{V}(J)$.

Moreover this polynomial is a polynomial with minimal degree that interpolates $p$ on $\mathcal{V}(J)$.
\end{theorem}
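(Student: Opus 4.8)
The plan is to turn the interpolation condition into a congruence modulo $J$ and then settle everything by a dimension count carried out degree by degree. Since $J$ is radical, a polynomial $g$ satisfies $g\mid\mathcal{V}(J)=p\mid\mathcal{V}(J)$ if and only if $g-p$ vanishes on $\mathcal{V}(J)$, and by the Hilbert Nullstellensatz applied to the radical ideal $J$ this is equivalent to $g-p\in J$. So the assertion is that the natural map $\ker L(\mathbf{D})\hookrightarrow\mathbb{C}[\mathbf{x}]\to\mathbb{C}[\mathbf{x}]/J$ is a bijection, and moreover that the preimage of a given class can be realized by a polynomial of least possible degree. Hypothesis \eqref{eq3.1} is exactly the statement that this map is injective, so the whole point is to obtain surjectivity for free.

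The key step is to restrict attention to $\mathbb{C}_{\le n}[\mathbf{x}]$ and compare dimensions. On the ideal side, $\mathbb{C}[\mathbf{x}]$ is an integral domain, so multiplication by $q$ is injective, and since degrees add, any element $qg\in J$ of degree $\le n$ has $\deg g\le n-l$; hence $J\cap\mathbb{C}_{\le n}[\mathbf{x}]=q\cdot\mathbb{C}_{\le n-l}[\mathbf{x}]$ and $\dim\bigl(\mathbb{C}_{\le n}[\mathbf{x}]/(J\cap\mathbb{C}_{\le n}[\mathbf{x}])\bigr)=\dim\mathbb{C}_{\le n}[\mathbf{x}]-\dim\mathbb{C}_{\le n-l}[\mathbf{x}]$. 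On the operator side, because the lowest-degree monomial of $L$ has degree $l$, Matsuura's Theorem \ref{thm3.1} gives $L(\mathbf{D})\mathbb{C}_{\le n}[\mathbf{x}]=\mathbb{C}_{\le n-l}[\mathbf{x}]$, so by rank--nullity $\dim\bigl(\ker L(\mathbf{D})\cap\mathbb{C}_{\le n}[\mathbf{x}]\bigr)=\dim\mathbb{C}_{\le n}[\mathbf{x}]-\dim\mathbb{C}_{\le n-l}[\mathbf{x}]$ as well. Thus $\ker L(\mathbf{D})\cap\mathbb{C}_{\le n}[\mathbf{x}]$ and $\mathbb{C}_{\le n}[\mathbf{x}]/(J\cap\mathbb{C}_{\le n}[\mathbf{x}])$ are finite-dimensional of the same dimension, and the linear map $\phi_n$ between them obtained by restricting the quotient map has kernel contained in $\ker L(\mathbf{D})\cap J=\{0\}$; hence $\phi_n$ is an isomorphism. (The degenerate range $n<l$ is immediate: there $J\cap\mathbb{C}_{\le n}[\mathbf{x}]=\{0\}$ and $L(\mathbf{D})$ annihilates all of $\mathbb{C}_{\le n}[\mathbf{x}]$, so $\phi_n$ is the identity.)

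From here the conclusions follow formally. For existence, take $n=\deg p$; surjectivity of $\phi_n$ yields $f\in\ker L(\mathbf{D})\cap\mathbb{C}_{\le n}[\mathbf{x}]$ with $f-p\in J$, i.e.\ $f\mid\mathcal{V}(J)=p\mid\mathcal{V}(J)$. For uniqueness, two such interpolants differ by an element of $\ker L(\mathbf{D})$ vanishing on $\mathcal{V}(J)$, which lies in $J$ by radicality, hence in $\ker L(\mathbf{D})\cap J=\{0\}$. For minimality, let $g$ be any interpolant of $p$ on $\mathcal{V}(J)$ and put $m=\deg g$; surjectivity of $\phi_m$ gives $f'\in\ker L(\mathbf{D})\cap\mathbb{C}_{\le m}[\mathbf{x}]$ with $f'-g\in J$, and since $g-p\in J$ as well, $f'$ interpolates $p$ on $\mathcal{V}(J)$; uniqueness forces $f'=f$, so $\deg f\le m$.

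The only real content is the dimension count in the second paragraph: verifying $J\cap\mathbb{C}_{\le n}[\mathbf{x}]=q\cdot\mathbb{C}_{\le n-l}[\mathbf{x}]$ (this is where $\deg q=l$ is used) and pairing it with Matsuura's theorem (this is where ``the lowest-degree monomial of $L$ has degree $l$'' is used) so that the two subquotients of $\mathbb{C}_{\le n}[\mathbf{x}]$ come out with equal dimension. After that, the proof reduces to the principle that an injective linear map between finite-dimensional spaces of equal dimension is onto, together with the radicality/Nullstellensatz dictionary between vanishing on $\mathcal{V}(J)$ and membership in $J$.
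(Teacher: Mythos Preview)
Your proof is correct and follows essentially the same route as the paper: the degree-by-degree dimension count using $J\cap\mathbb{C}_{\le n}[\mathbf{x}]=q\cdot\mathbb{C}_{\le n-l}[\mathbf{x}]$ on one side and Matsuura's theorem plus rank--nullity on the other, combined with the trivial-intersection hypothesis, is exactly the paper's argument. The only cosmetic difference is that the paper packages the conclusion as a direct-sum decomposition $\mathbb{C}_{\le n}[\mathbf{x}]=(\ker L(\mathbf{D})\cap\mathbb{C}_{\le n}[\mathbf{x}])\oplus(J\cap\mathbb{C}_{\le n}[\mathbf{x}])$ rather than as your isomorphism $\phi_n$, and you are more explicit than the paper about invoking radicality and the Nullstellensatz to pass between ``vanishing on $\mathcal{V}(J)$'' and ``membership in $J$'' for the uniqueness step.
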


\begin{proof}
Consider $L(\mathbf{D})$ as a mapping from $\mathbb{C}_{\le n}[\mathbf{x}]\to\mathbb{C}_{\le n}[\mathbf{x}]$. Then $\dim\left(\ker L(\mathbf{D})\cap
\mathbb{C}_{\le n}[\mathbf{x}]\right)+\dim\left(\operatorname{Im}L(\mathbf{D})\cap\mathbb{C}_{\le n}[\mathbf{x}]\right)=\dim\mathbb{C}_{\le n}[\mathbf{x}]$ and, by Theorem \ref{thm3.1}, $\operatorname{Im}L(\mathbf{D})\cap\mathbb{C}_{\le n}[\mathbf{x}]=\mathbb{C}_{\le n-l}[\mathbf{x}]$. Thus
$$
\dim\left(\ker L(\mathbf{D})\cap\mathbb{C}_{\le n}[\mathbf{x}]\right)
=\dim\mathbb{C}_{\le n}[\mathbf{x}]-\dim\mathbb{C}_{\le n-l}[\mathbf{x}].
$$
On the other hand, since $J$ is generated by polynomial $q$ of degree $l$, it follows that $J\cap\mathbb{C}_{\le n}[\mathbf{x}]=q\cdot\mathbb{C}_{\le n-l}[\mathbf{x}]$ and $\dim\left(J\cap\mathbb{C}_{\le n}[\mathbf{x}]\right)=\dim\mathbb{C}_{\le n-l}[\mathbf{x}]$. Thus
$$
\dim\left(\ker L(\mathbf{D})\cap\mathbb{C}_{\le n}[\mathbf{x}]\right)
+\dim\left(J\cap\mathbb{C}_{\le n}[\mathbf{x}]\right)
=\dim\mathbf{\mathbb{C}_{\le n}[\mathbf{x}]}
$$
and, since by assumption,
$$
\left(\ker L(\mathbf{D})\cap\mathbb{C}_{\le n}[\mathbf{x}]\right)\cap\left(J\cap\mathbb{C}_{\le n}[\mathbf{x}]\right)=\{0\}
$$
we conclude that
\begin{equation}
\mathbb{C}_{\le n}[\mathbf{x}]
=(\ker L(\mathbf{D})\cap\mathbb{C}_{\le n}[\mathbf{x}])\oplus(J\cap\mathbb{C}_{\le n}[\mathbf{x}]).
\label{eq3.2}
\end{equation}
Since this is true for all $n$, it follows that $\mathbb{C}\left[\mathbf{x}\right]=\ker L(\mathbf{D})\oplus J$ and thus for every $p\in\mathbb{C}\left[\mathbf{x}\right]$ there exists unique $f\in\ker L(\mathbf{D}))$ such that $(f-p)\in J$ and, in particular, $f\in\ker L(\mathbf{D}))$ interpolates $p$ on $\mathcal{V}(J)$.

For the ``moreover'' part of the theorem let $f_1$ be a polynomial of minimal degree that interpolates $p$ on $\mathcal{V}(J)$ and suppose that $\deg f_1=m$. Then by \eqref{eq3.2} there exists a polynomial $f\in\ker L(\mathbf{D)\cap\mathbb{C}_{\le m}[\mathbf{x}]}$ (thus of degree $m$) such that $f$ interpolates $f_1$, and hence, $p$ on $\mathcal{V}(J)$.
\end{proof}

Notice that in the language of PDEs the theorem says that under the assumption \eqref{eq3.1} the boundary value problem
$$
\left\{
\begin{array}{c}
L(\mathbf{D)}f=0 \\
f\mid\mathcal{V}=p
\end{array}
\right.
$$
has a polynomial solution and, in fact, such solution is unique.

\begin{remark}\hfill
\begin{enumerate}
\item[a)]  In Theorem \ref{thm3.2} we only dealt with an ideal $J$ generated by one polynomial $J=\langle q\rangle$. If the ideal $J$ is generated by several polynomial $J=\left\langle q_1,\dotsc,q_n\right\rangle$ we can apply the theorem to the ideal generated, by say, $J_1:=\left\langle q_1\right\rangle$. Then, since $\mathcal{V}(J)\subset\mathcal{V}\left(J_1\right)$ it follows that every $f$ that interpolates $p$ on $\mathcal{V}\left(J_1\right)$ also interpolates $p$ on $\mathcal{V}(J)$. Of course in this case we cannot guarantee uniqueness.

\item[b)]  Since the variety $\mathcal{V}$ was not assumed to be irreducible, the theorem also applies to the ``boundary value problem''
$$
\left\{
\begin{array}{c}
L(\mathbf{D)}f=0 \\
f\mid\mathcal{V}_j=p_j\text{, }j=1,\dotsc,n
\end{array}
\right..
$$

In this case we have to assume that there exists a polynomial $p$ that interpolates $p_j$ on $\mathcal{V}_j$ and apply the theorem to polynomial $p$ on $\mathcal{V}:=\bigcup\limits_{j=1}^n\mathcal{V}_j$.

\item[c)]  The condition $\ker L(\mathbf{D})\cap J=\{0\}$ is clearly necessary and sufficient for the uniqueness of interpolation, if it exists. On the other
hand, we saw that the condition guarantees the existence of an interpolation. This is a complete analogue of an elementary result in linear algebra: the equation $Ax=b$ for an $n\times n$ matrix $A$ has a solution for all $b$ if and only if the solution, if it exists, is unique.
\end{enumerate}
\end{remark}

Next, we will show that for any ideal $J=\langle q\rangle$ there exists a homogeneous polynomial $L$ such that $\ker\overline{L}(D)$ satisfies the conditions of
Theorem \ref{thm3.2}. In fact the polynomial $L=q^{\wedge}$ will do the job. (Here $\bar{L}$ denotes the complex conjugate polynomial to $L$.)

Here are a few preliminaries:

We let $\mathbb{H}_k[\mathbf{x}]$ denote the space of homogeneous polynomials in $\mathbf{x}$ of degree $k$. For a polynomial $L(\mathbf{x})=L\left(x_1,\dotsc,x_d\right)$ we let $L(\mathbf{D})=L\left(D_1,\dotsc,D_d\right)$ be the differential operator obtained by formally replacing the variables $x_j$ with the differential operators $D_{j}$. Next we introduce a Hermitian inner product on $\mathbb{H}_k[\mathbf{x}]$
defined by $\langle f,L\rangle=\overline{L}(D)f$. Observe that
$$
\left\langle\mathbf{x}^a,\mathbf{x}^{\beta}\right\rangle
=\left\{
\begin{array}{ccc}
0 & \text{if} & \alpha \neq \beta \\
\alpha ! & \text{if} & \alpha =\beta%
\end{array}
\right..
$$
Hence, it is easy to see that, if $f(\mathbf{x})=\sum\limits_{\left\vert\alpha\right\vert=k}a_a\mathbf{x}^a$ and $L(x)=\sum_{\left\vert\alpha\right\vert=k}b_a\mathbf{x}^a$ then $\langle f,L\rangle=\sum\limits_{\left\vert\alpha
\right\vert=k}\alpha !a_{\alpha}\overline{b}_a$ where $\alpha !=(\alpha_1,\dotsc,\alpha_{d})!=\prod\limits_{j=1}^d\alpha_j!$. It also follows from straightforward computations that for any $j=1,\dotsc,d$ and any $L\in\mathbb{H}_{k+1}[\mathbf{x}]$ and $f\in\mathbb{H}_k[\mathbf{x}]$ we have
\begin{equation}
\left\langle x_jf,L\right\rangle=\left\langle f,D_jL\right\rangle.
\label{eq3.3}
\end{equation}

Here the first inner product is in $\mathbb{H}_{k+1}[\mathbf{x}]$ while the second is in $\mathbb{H}_k[\mathbf{x}]$.

\begin{theorem}\label{thm3.4}
Let $J=\langle q\rangle$ and let $L=q^{\wedge}$. Then $\ker\overline{L}(\mathbf{D})\cap J=\{0\}$ and hence, by Theorem \ref{thm3.2}, $\mathbb{C}\left[\mathbf{x}\right]=\ker\overline{L}(\mathbf{D})\oplus J$.
\end{theorem}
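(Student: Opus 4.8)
The plan is to argue by contradiction using leading forms, with the only real ingredient being the positive-definiteness of the Hermitian form $\langle\cdot,\cdot\rangle$ on each $\mathbb{H}_k[\mathbf{x}]$ together with the adjoint relation \eqref{eq3.3}. Once $\ker\overline{L}(\mathbf{D})\cap J=\{0\}$ is established, the splitting $\mathbb{C}[\mathbf{x}]=\ker\overline{L}(\mathbf{D})\oplus J$ is immediate from Theorem \ref{thm3.2} applied to the polynomial $\overline{q^\wedge}$, all of whose monomials have degree exactly $l=\deg q$, so its lowest-degree monomial has degree $l$ as required there.

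So suppose $f\in\ker\overline{L}(\mathbf{D})\cap J$ with $f\neq0$. Since $J=\langle q\rangle=q\,\mathbb{C}[\mathbf{x}]$, write $f=gq$ with $g\neq0$ and put $m:=\deg g$. Because $\mathbb{C}[\mathbf{x}]$ is an integral domain, $\deg f=m+l$ and the leading form factors as $f^{\wedge}=g^{\wedge}q^{\wedge}$. Now $\overline{L}(\mathbf{D})=\overline{q^{\wedge}}(\mathbf{D})$ is a constant-coefficient operator every term of which has order exactly $l$, hence it maps $\mathbb{H}_k[\mathbf{x}]\to\mathbb{H}_{k-l}[\mathbf{x}]$ and annihilates polynomials of degree $<l$. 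Consequently the top homogeneous component of $\overline{L}(\mathbf{D})f$ is $\overline{L}(\mathbf{D})f^{\wedge}=\overline{q^{\wedge}}(\mathbf{D})\bigl(g^{\wedge}q^{\wedge}\bigr)\in\mathbb{H}_m[\mathbf{x}]$, while all remaining components have degree strictly less than $m$. Therefore it suffices to show $\overline{q^{\wedge}}(\mathbf{D})\bigl(g^{\wedge}q^{\wedge}\bigr)\neq0$, which will contradict $f\in\ker\overline{L}(\mathbf{D})$.

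To prove that, I would first record the adjoint identity obtained by iterating \eqref{eq3.3}: for homogeneous $P$, $w$, $h$ of compatible degrees,
$$
\bigl\langle\,\overline{P}(\mathbf{D})w,\ h\,\bigr\rangle=\bigl\langle\,w,\ Ph\,\bigr\rangle,
$$
which follows from $\overline{h}(\mathbf{D})\,\overline{P}(\mathbf{D})=\overline{Ph}(\mathbf{D})$ and the definition $\langle u,v\rangle=\overline{v}(\mathbf{D})u$. Applying it with $P=q^{\wedge}$, $w=g^{\wedge}q^{\wedge}$, $h=g^{\wedge}$ gives
$$
\bigl\langle\,\overline{q^{\wedge}}(\mathbf{D})\bigl(g^{\wedge}q^{\wedge}\bigr),\ g^{\wedge}\,\bigr\rangle=\bigl\langle\,g^{\wedge}q^{\wedge},\ g^{\wedge}q^{\wedge}\,\bigr\rangle=\bigl\|g^{\wedge}q^{\wedge}\bigr\|^{2}.
$$
Since $g^{\wedge}\neq0$, $q^{\wedge}\neq0$ and the ring is a domain, $g^{\wedge}q^{\wedge}\neq0$; and since the form is positive definite ($\langle u,u\rangle=\sum_{|\alpha|=k}\alpha!\,|a_{\alpha}|^{2}$) the right-hand side is strictly positive. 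Hence $\overline{q^{\wedge}}(\mathbf{D})\bigl(g^{\wedge}q^{\wedge}\bigr)\neq0$, which closes the contradiction and yields $\ker\overline{L}(\mathbf{D})\cap J=\{0\}$; the direct sum then follows from Theorem \ref{thm3.2}.

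The substantive points are purely organizational: verifying that $\overline{L}(\mathbf{D})$ shifts homogeneous degree by exactly $l$ so that passing to leading forms is legitimate, and tracking the conjugations in the adjoint identity carefully so that the pairing $\langle\overline{q^{\wedge}}(\mathbf{D})(g^{\wedge}q^{\wedge}),g^{\wedge}\rangle$ collapses to the genuine squared norm $\|g^{\wedge}q^{\wedge}\|^{2}$ rather than to a twisted bilinear quantity. After that, positive-definiteness does all the work, and in particular no use of radicality of $J$ enters this step — radicality is needed only to quote Theorem \ref{thm3.2}.
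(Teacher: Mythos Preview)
Your argument is correct and is genuinely shorter than the paper's. You reduce to leading forms, then use the single identity $\langle \overline{q^{\wedge}}(\mathbf{D})(g^{\wedge}q^{\wedge}),\,g^{\wedge}\rangle=\langle g^{\wedge}q^{\wedge},\,g^{\wedge}q^{\wedge}\rangle>0$ to obtain the contradiction; no induction and no appeal to Matsuura's theorem are needed for the trivial-intersection part. The paper instead defines $\mathbb{F}_k:=(J^{\wedge}\cap\mathbb{H}_k[\mathbf{x}])^{\perp}$, shows via the adjoint relation that $\mathbb{F}_{k+1}$ is mapped into $\mathbb{F}_k$ by each $D_j$, proves by induction that $\mathbb{F}_k\subset\ker\overline{L}(\mathbf{D})$, and then uses a dimension count (invoking Theorem~\ref{thm3.1}) to upgrade this inclusion to the equality $\mathbb{F}_k=\ker\overline{L}(\mathbf{D})\cap\mathbb{H}_k[\mathbf{x}]$. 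So both proofs rest on the same adjoint/positive-definiteness mechanism, but the paper's route yields the additional structural fact that $\ker\overline{L}(\mathbf{D})\cap\mathbb{H}_k[\mathbf{x}]$ is exactly the orthogonal complement of $J^{\wedge}\cap\mathbb{H}_k[\mathbf{x}]$, whereas your route is more economical and isolates precisely the inequality that drives the result. One cosmetic remark: your sentence ``the top homogeneous component of $\overline{L}(\mathbf{D})f$ is $\overline{L}(\mathbf{D})f^{\wedge}$'' is slightly circular in wording (that component being nonzero is what you are about to prove); it would read more cleanly as ``the degree-$m$ homogeneous component of $\overline{L}(\mathbf{D})f$ equals $\overline{L}(\mathbf{D})f^{\wedge}$, so $\overline{L}(\mathbf{D})f=0$ forces $\overline{L}(\mathbf{D})f^{\wedge}=0$.''
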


\begin{proof}
Let $\mathbb{F}_k=\left\{F\in\mathbb{H}_{k}[\mathbf{x}]:<f,F>=0\text{, }\forall f\in J^{\wedge}\cap\mathbb{H}_k[\mathbf{x}]\right\}$. Hence $\mathbb{F}_k=\left(J^{\wedge}\cap\mathbb{H}_k[\mathbf{x}]\right)^{\bot}$ and
\begin{equation}
\mathbb{F}_k\oplus\left(J^{\wedge}\cap\mathbb{H}_k[\mathbf{x}]\right)
=\mathbb{H}_k[\mathbf{x}].
\label{eq3.4}
\end{equation}
Next we assume that $F\in\mathbb{F}_{k+1}$. Then $\left\langle F,x_jf\right\rangle=0$ for all $f\in J^{\wedge}\cap\mathbb{H}_k[\mathbf{x}]$ for all $j=1,\dotsc,d$ hence, by \eqref{eq3.3}, $\left\langle D_jF,f\right\rangle=0$ for $f\in f\in J^{\wedge}\cap\mathbb{H}_k[\mathbf{x}]$. In other words
$$
F\in\mathbb{F}_{k+1}\text{ implies }D_jF\in\mathbb{F}_k.
$$
Next we want to show that
\begin{equation}
\mathbb{F}_k\subset\ker\overline{L}(\mathbf{D})
\label{eq3.5}
\end{equation}
for every $k$. By definition of $\mathbb{F}_l$, for every $F\in\mathbb{F}_l$ we have $0=\left\langle q^{\wedge},F\right\rangle=\left\langle F,q^{\wedge}\right\rangle$ hence $\mathbb{F}_l\subset\ker\overline{L}$. By induction, assume that $\mathbb{F}_k\subset\ker\overline{L}$ and let $F\in\mathbb{F}_{k+1}$. Then $D_jF\in\mathbb{F}_{k+1}$, for every $j=1,\dotsc,d$. Hence $\overline{L}(\mathbf{D)(}D_jF)=0$ and thus $D_j(\overline{L}(\mathbf{D})F)=0$. But $\overline{L}(\mathbf{D)}F$ is a homogeneous polynomial and if all of its partial derivatives equal to zero. Hence $\overline{L}(\mathbf{D)}F=0$.
Finally observe that, since $\deg q^{\wedge}=l$, we have $\mathbb{C}_{<l}[\mathbf{x}]\subset\ker\overline{L}(\mathbf{D)}$ and our conclusion will follow from \eqref{eq3.4} and the fact that
$$
\mathbb{F}_k=\ker\overline{L}(\mathbf{D})\cap\mathbb{H}_k[\mathbf{x}].
$$
In view of \eqref{eq3.5} it is sufficient to show that
\begin{equation}
\dim\mathbb{F}_k
=\dim\left(\ker\overline{L}(\mathbf{D})\cap\mathbb{H}_k[\mathbf{x}]\right).
\label{eq3.6}
\end{equation}
Observe that, by \eqref{eq3.4} $\dim\mathbb{F}_k=\dim\mathbb{H}_k[\mathbf{x}]-\dim\left(J^{\wedge}\cap\mathbb{H}_k[\mathbf{x}]\right)$. Since $J\cap\mathbb{C}_{\le k}[\mathbf{x}]=q\cdot\mathbb{C}_{\le k-l}[\mathbf{x}]$, it follows that $\dim\left(J^{\wedge}\cap\mathbb{H}_k[\mathbf{x}]\right)=\dim\mathbb{H}_{k-l}[\mathbf{x}]$ and thus
$$
\dim\mathbb{F}_k
=\dim\mathbb{H}_k[\mathbf{x}]
-\dim\mathbb{H}_{k-l}[\mathbf{x}].
$$
On the other hand consider $\overline{L}(\mathbf{D})$ as an operator on $\mathbb{H}_k[\mathbf{x}]$. By Theorem \ref{thm3.1} its image is $\mathbb{H}_{k-l}[\mathbf{x}]$ hence
\begin{gather*}
\dim\left(\ker\overline{L}(\mathbf{D})\cap\mathbb{H}_k[\mathbf{x}]\right) \\
=\dim\mathbb{H}_k[\mathbf{x}]-\dim\left(\overline{L}(\mathbf{D})\mathbb{H}_k[\mathbf{x}]\right) \\
=\dim\mathbb{H}_k[\mathbf{x}]-\dim\mathbb{H}_{k-l}[\mathbf{x}] \\
=\dim\mathbb{F}_k.
\end{gather*}
This establishes \eqref{eq3.6} and thus proves the theorem.
\end{proof}

\begin{remark}
The last theorem has an interesting relation to a question asked in \cite{Cla:McK:She}: What subspaces of $\mathbb{C}\left[\mathbf{x}\right]$ have a complement which is an ideal in $\mathbb{C}\left[\mathbf{x}\right]$? The last theorem shows that for any homogeneous polynomial $L$ the space of polynomial solutions of the differential equation $\overline{L}(\mathbf{D})f=0$ has an ideal complement, namely any ideal generated by a polynomial $q$ with $q^{\wedge}=L$. In particular, the space of all harmonic polynomials in $d$ variables is complemented by the ideal $J=\left\langle x_1^2+\dotsb+x_d^2\right\rangle$.
\end{remark}

\section*{Acknowlegements}

We wish to thank the referee for many useful corrections and suggestions.

\bibliography{arbitrary}
\bibliographystyle{plain}

\end{document}